\numberwithin{claim}{subsection}
\newcounter{my_enumerate_counter}
\newcommand{\pushcounter}{\setcounter{my_enumerate_counter}{\value{enumi}}}
\newcommand{\popcounter}{\setcounter{enumi}{\value{my_enumerate_counter}}}
\title{Dependence of functions on their variables}
\author{Ilijas Farah}
\address{Department of Mathematics and Statistics\\
	York University\\
	4700 Keele Street\\
	North York, Ontario\\ Canada, M3J 1P3\\
	and 
	Ma\-te\-ma\-ti\-\v cki Institut SANU\\
	Kneza Mihaila 36\\
	11\,000 Beograd, p.p. 367\\
	Serbia}
\email{ifarah@yorku.ca}
\urladdr{https://ifarah.mathstats.yorku.ca}
\thanks{Partially supported by NSERC}
\date{\today}
\begin{document}
\maketitle

In this note we present a readable version of the proof of the main result of \cite{Fa:Dependence}, giving a sufficient and necessary condition for a function on a combinatorial cube to essentially (locally) depend on at most one variable (see the end of the paper for the motivation),  as well as some limiting results.  It will not be submitted for publication, but it will be included in the upcoming revised version of \cite{Fa:AQ}. 

In the following we identify $d\in \bbN$ with the set $\{0,\dots, d-1\}$ and $\pi_j$ stands for the projection from $\prod_{i<d} X_i$ onto the $j$-th coordinate,  $\pi_j(x_0,x_1,\dots x_{d-1})=x_j$. 
For a partition $d=u\sqcup v$, $x\in \prod_{i\in u} X_i$ and $y\in \prod_{i\in v} X_i$ we write $x^\frown y$ for $z\in X^d$ such that $z(i)=x(i)$ for $i\in u$ and $z(i)=y(i)$ for $i\in v$. 

\begin{thm}
	\label{T.Dependence} For all $d\geq 1$, sets $X_i$, for $i<d$ and $Y$, every $f\colon \prod_{i<d} X_i\to Y$ satisfies  exactly one of the following. 
	\begin{enumerate}
		\item \label{1.Dependence} There are $k\in \bbN$ and  partitions $X_i=\bigsqcup_{j<k} U_{i,j}$ such that for every $s\in k^d$ for some $j(s)<d$  and $g_s\colon U_{i,j(s)}\to Y$ the functions $f$ and $g_s\circ \pi_{j(s)}$ agree on $\prod_{i<d} U_{i,s(i)}$. 
		\item \label{2.Dependence} There are a partition $d=u\sqcup v$ into nonempty sets and $x_m\in \prod_{i\in u} X_i$ and $y_m\in \prod_{i\in v} X_i$, for $m\in \bbN$, such that for all $l$ and all $m<n$ we have 
		\[
		f(x_l{}^\frown y_l)\neq f(x_m{}^\frown y_n). 
		\]
		\pushcounter
	\end{enumerate}
\end{thm}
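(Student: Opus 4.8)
\medbreak\noindent\emph{Proof plan.}\quad
The two alternatives exclude each other. Suppose \eqref{1.Dependence} holds, witnessed by $k$, partitions $X_i=\bigsqcup_{j<k}U_{i,j}$, and for each $s\in k^d$ a coordinate $j(s)$ and a map $g_s$, and suppose \eqref{2.Dependence} also holds, witnessed by $d=u\sqcup v$ and sequences $(x_m),(y_m)$. For each $m$ let $s_m\in k^d$ be the unique index with $x_m{}^\frown y_m\in\prod_i U_{i,s_m(i)}$; since the $u$-coordinates of $x_m{}^\frown y_m$ come from $x_m$ and the $v$-coordinates from $y_m$, the restriction $s_m\rs u$ depends only on $x_m$ and $s_m\rs v$ only on $y_m$. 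As $k^d$ is finite there are an infinite $M\subseteq\bbN$ and a fixed $s$ with $s_m=s$ for all $m\in M$, and then $x_m{}^\frown y_n\in\prod_i U_{i,s(i)}$ for all $m,n\in M$. On that sub-cube $f$ agrees with $g_s\circ\pi_{j(s)}$, so $f(x_m{}^\frown y_n)$ equals $f(x_m{}^\frown y_m)$ when $j(s)\in u$ and equals $f(x_n{}^\frown y_n)$ when $j(s)\in v$; taking $m<n$ in $M$ and $l=m$, respectively $l=n$, this contradicts \eqref{2.Dependence}.

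For the converse one must show that the failure of \eqref{1.Dependence} forces \eqref{2.Dependence}. For $d=1$ there is nothing to prove, as \eqref{1.Dependence} then holds with $k=1$. For $d\ge 2$ I would reduce to the two-variable case by induction on $d$: apply the $d=2$ statement to $f$ regarded as a function of the two super-variables ranging over $\prod_{i\in u}X_i$ and $\prod_{i\in v}X_i$ for a partition $d=u\sqcup v$ into nonempty sets (a staircase for those two super-variables is literally an instance of \eqref{2.Dependence} for $f$, so the $d=2$ statement returns a decomposition into one-variable rectangles), then recurse on the rectangles that still depend on more than one coordinate, checking each time that a staircase produced by the recursion would lift to one for $f$. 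Reconciling the rectangle decompositions with the product structure of the cube is the price to pay here, but it should be routine.

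So assume $d=2$ and that no finite family of partitions of $X_0$ and $X_1$ splits $X_0\times X_1$ into rectangles on each of which $f$ is a function of one coordinate; the goal is to produce a staircase. This cannot be reduced to one coordinate: there are $f$ satisfying \eqref{1.Dependence} only through a genuine mixture of ``constant in $X_0$'' and ``constant in $X_1$'' rectangles, for which no finite partition of $X_0$ alone, and none of $X_1$ alone, makes the set of realized sections finite. The plan is a recursion. Having $x_0,\dots,x_{n-1}$ and $y_0,\dots,y_{n-1}$ with the staircase inequality for indices below $n$, I feed \eqref{1.Dependence} a product partition that isolates the already-chosen points and records enough about the finitely many values $f(x_l{}^\frown y_l)$, $l<n$; all resulting rectangles except one distinguished rectangle are one-variable for trivial reasons, so the distinguished one cannot be, i.e.\ both coordinates still ``move'' on it, and that is what lets me pick $x_n,y_n$ inside it so that the new cross-values $f(x_m{}^\frown y_n)$, $m<n$, as well as $f(x_n{}^\frown y_n)$, avoid all the finitely many values committed to so far. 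Finally, Ramsey's theorem applied to the colouring of triples $\{l,m,n\}$ recording whether $f(x_l{}^\frown y_l)=f(x_m{}^\frown y_n)$ upgrades a sequence satisfying the inequality only for $l\in\{m,n\}$ (which is what the recursion delivers) to one satisfying it for all $l$; the homogeneous colour cannot be the equality one, since from it one reads off exactly the finite one-variable partition assumed not to exist.

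The step I expect to be the main obstacle is keeping this recursion alive: at stage $n$ one must design the product partition fed to \eqref{1.Dependence} so that the distinguished bad rectangle it forces genuinely leaves room for a choice of $x_n,y_n$ whose cross-values are \emph{new}, rather than being pinned to one of the forbidden finitely many, and one must verify that the two ``moving'' coordinates on that rectangle can be used simultaneously for this. It is precisely in guaranteeing this room at every stage that the full hypothesis (that \eqref{1.Dependence} fails for every finite product partition, not just the trivial one) is needed.
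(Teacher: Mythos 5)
Your first paragraph (mutual exclusion) is correct and is essentially the paper's Lemma~\ref{L.Dependence.1}: stabilize the cell by pigeonhole and contradict the staircase with $l=m$ or $l=n$ according to whether $j(s)\in u$ or $j(s)\in v$. The main implication, however, has a genuine gap, and it is exactly at the step you flag. At stage $n$, the failure of \eqref{1.Dependence} only says that \emph{some} cell of whatever product partition you feed it is not one-variable; you do not get to choose which. If you isolate $x_0,\dots,x_{n-1}$ and $y_0,\dots,y_{n-1}$ as singletons and refine the remainders by the (finitely many) equality patterns $y\mapsto\bigl(\text{whether } f(x_m{}^\frown y)=f(x_l{}^\frown y_l)\bigr)_{m,l<n}$, the distinguished bad rectangle may lie inside a class with a \emph{forbidden} pattern, e.g.\ one on which every $y$ satisfies $f(x_0{}^\frown y)=f(x_0{}^\frown y_0)$; then no choice of $y_n$ in that rectangle yields new cross-values, and ``both coordinates move on it'' is irrelevant, since the values $f(x_m{}^\frown y_n)$ for old $x_m$ live in the trivially one-variable cells, over which the hypothesis gives no control whatsoever. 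Overcoming this adversarial placement is the heart of the proof, not a technical residue: the paper fixes ultrafilters $\cU_i$ and runs a case analysis (Lemma~\ref{L.Dependence.UF}) in which the persistent bad situation does not produce the staircase directly but one of the weaker configurations \eqref{5.Dependence} or \eqref{4.Dependence}, which a separate lemma (Lemma~\ref{L.Dependence.2}) upgrades to \eqref{2.Dependence}; the finite partition in \eqref{1.Dependence} is then extracted at the very end by compactness of $(\beta X)^d$, not stage by stage.

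Two further steps you call routine are not. First, your Ramsey upgrade (the min-colouring of triples) together with what the recursion would deliver only gives $f(x_l{}^\frown y_l)\neq f(x_m{}^\frown y_n)$ for $l<m<n$ and for $l\in\{m,n\}$; the cases $m<l<n$ and $m<n<l$ are untouched, and handling them is why Lemma~\ref{L.Dependence.2} splits on whether the diagonal $m\mapsto f(x_m{}^\frown y_m)$ is injective and then uses the six-colour canonical-Ramsey-style colouring $h'$. Second, the reduction of $d>2$ to $d=2$ via two super-variables does not work as stated: alternative \eqref{1.Dependence} for the two-block function partitions the blocks $\prod_{i\in u}X_i$ and $\prod_{i\in v}X_i$ into pieces that need not be sub-products, so recursing ``inside a rectangle'' is not an instance of Theorem~\ref{T.Dependence}, and neither staircases nor partitions found there transfer to $f$ in the way you need; the paper instead performs the induction on $d$ inside the ultrafilter lemma, keeping the coordinates separate and stabilizing the distinguished coordinate $j(x)$ along $\cU_d$. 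A completed version of your greedy scheme would use only dependent choice, which the paper's limiting examples do not exclude, so the plan is not hopeless in principle; but as written the decisive step is missing rather than merely unpolished.
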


The proof of Theorem~\ref{T.Dependence} below uses compactness of the \v Cech--Stone compactification $\beta X$ of $X$ equipped with the discrete topology, a consequence of the Axiom of Choice not provable in ZF alone. In \S\ref{S.Choice} we show that (assuming that ZF has a model) in some models of ZF  its conclusion fails, and also show in ZF that the conclusion of  Theorem~\ref{T.Dependence} holds for every well-orderable set $X$.  The latter improves an observation due to the referee of \cite{Fa:Dependence}, where a proof in the case $X=\bbN$ had been sketched, while the former is new.

\section{Proof of Theorem~\ref{T.Dependence}}
We will prove the case of Theorem~\ref{T.Dependence} when the sets $X_i$ are equal for $i<d$. 
The case when all $X_i$ are of the same cardinality follows immediately, and the general case can be proven by appropriately changing the notation in the provided proof.   
The proof of Theorem~\ref{T.Dependence} is given after a few lemmas. 

\begin{lemma}
	\label{L.Dependence.1} For all $f\colon X^d\to Y$, possibilities \eqref{1.Dependence} and \eqref{2.Dependence} from Theorem~\ref{T.Dependence} exclude each other. 
\end{lemma}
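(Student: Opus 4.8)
The plan is to argue by contradiction: suppose $f\colon X^d\to Y$ satisfies both \eqref{1.Dependence} and \eqref{2.Dependence} simultaneously. From \eqref{2.Dependence} we fix the partition $d=u\sqcup v$ into nonempty sets together with the sequences $x_m\in\prod_{i\in u}X_i$ and $y_m\in\prod_{i\in v}X_i$, $m\in\bbN$, witnessing the inequality $f(x_l{}^\frown y_l)\neq f(x_m{}^\frown y_n)$ for all $l$ and all $m<n$. From \eqref{1.Dependence} we fix $k\in\bbN$ and the partitions $X_i=\bigsqcup_{j<k}U_{i,j}$, together with the assignment $s\mapsto j(s)$ and the functions $g_s$. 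The whole point is that the partition in \eqref{1.Dependence} has only finitely many (namely $k$) pieces per coordinate, while \eqref{2.Dependence} supplies infinitely many points, so the pigeonhole principle will force a collision that the local dependence in \eqref{1.Dependence} cannot tolerate.

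First I would use pigeonhole to extract an infinite set $M\subseteq\bbN$ and a single tuple $s\in k^d$ such that for every $m\in M$ the diagonal point $x_m{}^\frown y_m$ lies in $\prod_{i<d}U_{i,s(i)}$; concretely, the map sending $m$ to the tuple recording which block $U_{i,\cdot}$ contains the $i$-th coordinate of $x_m{}^\frown y_m$ takes values in the finite set $k^d$, so some fibre $M$ is infinite. Fix this $s$, let $j=j(s)<d$, and consider the two cases $j\in u$ and $j\in v$; by the symmetry of the roles of $u$ and $v$ in the statement (swapping $u\leftrightarrow v$, $x\leftrightarrow y$, and reversing the order of indices turns the condition $m<n$ into $n<m$) it suffices to treat $j\in u$. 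Now pick any two indices $m<n$ in $M$. Since $j\in u$, the point $x_m{}^\frown y_n$ agrees with $x_m{}^\frown y_m$ in coordinate $j$; moreover both $x_m{}^\frown y_n$ and $x_m{}^\frown y_m$ lie in $\prod_{i<d}U_{i,s(i)}$ — for $m$ this is how $M$ was chosen, and for the mixed point one checks coordinatewise, using that the $u$-coordinates come from $x_m$ (so they are in the $s$-blocks because $x_m{}^\frown y_m$ is) and the $v$-coordinates come from $y_n$ (so they are in the $s$-blocks because $x_n{}^\frown y_n$ is). Hence $f$ agrees with $g_s\circ\pi_j$ on both points, and since $\pi_j(x_m{}^\frown y_n)=\pi_j(x_m{}^\frown y_m)$ we get
\[
f(x_m{}^\frown y_n)=g_s(\pi_j(x_m{}^\frown y_n))=g_s(\pi_j(x_m{}^\frown y_m))=f(x_m{}^\frown y_m),
\]
contradicting the inequality $f(x_m{}^\frown y_m)\neq f(x_m{}^\frown y_n)$ from \eqref{2.Dependence} (applied with $l=m$). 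In the case $j\in v$ one instead compares $x_m{}^\frown y_n$ with $x_n{}^\frown y_n$, which agree in the $v$-coordinate $j$, and derives $f(x_m{}^\frown y_n)=f(x_n{}^\frown y_n)$, again contradicting \eqref{2.Dependence} (now with $l=n$, using $m<n$).

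The only real subtlety — and the step I would be most careful about — is the bookkeeping in the claim that the mixed point $x_m{}^\frown y_n$ still lies in the product of the chosen blocks $\prod_{i<d}U_{i,s(i)}$: this works precisely because $s$ was chosen once and for all to simultaneously serve every $m\in M$, so the $u$-part of $x_m{}^\frown y_m$ and the $v$-part of $x_n{}^\frown y_n$ both land in the $s$-indexed blocks. Everything else is an application of the pigeonhole principle and of the defining property of $g_s$ in \eqref{1.Dependence}. No use of choice beyond countable pigeonhole is needed here, which is consistent with the remark in the excerpt that the genuinely choice-dependent part of the theorem is the existence half, not this exclusivity lemma.
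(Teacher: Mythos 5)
Your proof is correct and is essentially the paper's own argument: the paper likewise uses pigeonhole (done in two stages, first on the $u$-parts $x_m$ and then on the $v$-parts $y_m$, which yields exactly your infinite set $M$ with a fixed $s\in k^d$) and then derives the same two contradictions by comparing $x_m{}^\frown y_m$ with $x_m{}^\frown y_n$ when $j(s)\in u$ and $x_m{}^\frown y_n$ with $x_n{}^\frown y_n$ when $j(s)\in v$. Your careful check that the mixed point $x_m{}^\frown y_n$ stays in $\prod_{i<d}U_{i,s(i)}$ is exactly the point the paper's proof relies on as well.
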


\begin{proof} 
	Otherwise, we can  fix a partition $X=\bigsqcup_{j<k} U_j$, $j(s)$, and $g_s\colon U_{j(s)}\to Y$ such that $f$ and $g_s\circ \pi_{j(s)}$ agree on $\prod_{i<d} U_{s(i)}$ for all $s\in d^k$ as in \eqref{1.Dependence}. Also fix $u,v$, $x_m$, and $y_m$ as in \eqref{2.Dependence}. Let $s_u\in k^u$ be such that the set $Z=\{m\mid x_m\in \prod_{i\in u} U_{s_u(i)}\}$ is infinite, and let $s_v\in k^v$ be such that the set $Z'=\{m\in Z\mid y_m\in \prod_{i\in v} U_{s_v(i)}\}$ is infinite. Let $s=s_u^\frown s_v$ .  Fix $m<n$ in $Z'$. If $j(s)\in u$, then $f(x_m{}^\frown y_m)\neq f(x_m{}^\frown y_n)$ although $\pi_{j(s)}(x_m{}^\frown y_m)=\pi_{j(s)}(x_m{}^\frown y_n)$, contradicting the assumption that on $\prod_{i<d} U_{s(i)}$ the function $f$ depends only on the $j(s)$-coordinate. 
	Therefore $j(s)\in v$. Then $f(x_m{}^\frown y_n)\neq f(x_n{}^\frown y_n)$ and $\pi_{j(s)}(x_m{}^\frown y_m)=\pi_{j(s)}(x_m{}^\frown y_n)$, contradicting the assumption that  $f$ depends only on the $j(s)$-coordinate on $\prod_{i<d} U_{s(i)}$. 
\end{proof}

\begin{lemma}
	\label{L.Dependence.2} Assume that for $f\colon X^d\to Y$ there are a partition $d=u\sqcup v$ into nonempty sets and $x_m\in X^u$ and $y_m\in X^v$, for $m\in \bbN$,  that satisfy one of the following. 
	\begin{enumerate}
		\popcounter
		\item \label{5.Dependence} For all $l<m<n$ we have 
		\[
		f(x_l{}^\frown y_l)\neq f(x_l{}^\frown y_m)\quad \text{and}\quad  f(x_l{}^\frown y_m)=f(x_l{}^\frown y_n). 
		\]
		\item \label{4.Dependence}For all $m<n$ we have 
		\[
		f(x_m,y_m)\neq f(x_m,y_n)\quad \text{ and}\quad  f(x_m,y_m)\neq f(x_n,y_m).
		\]			  
		\pushcounter
	\end{enumerate}
	Then \eqref{2.Dependence} of Theorem~\ref{T.Dependence} applies. 
\end{lemma}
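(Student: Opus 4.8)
The plan is to treat hypotheses~\eqref{5.Dependence} and~\eqref{4.Dependence} separately. Write $M_{ij}:=f(x_i{}^\frown y_j)$, so that $M_{mm}$ are the ``diagonal'' values the hypotheses refer to; in each case I pass to an infinite set of indices (using Ramsey's theorem), and for~\eqref{4.Dependence} I moreover replace the diagonal $(x_m{}^\frown y_m)_m$ by a ``staggered'' re\nobreakdash-indexing. Recall that~\eqref{2.Dependence} asks only for \emph{some} partition into nonempty sets.

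\emph{Case \eqref{5.Dependence}.} The equality clause says that for each $l$ the entry $M_{lm}$ is independent of $m>l$; call it $c_l$. The inequality clause gives $M_{ll}\neq c_l$, while $M_{mn}=c_m$ for all $m<n$. So it suffices to find an infinite $A\subseteq\bbN$ with $\{M_{ll}:l\in A\}$ disjoint from $\{c_l:l\in A\}$: then $(x_l)_{l\in A}$, $(y_l)_{l\in A}$ and the partition $u\sqcup v$ witness~\eqref{2.Dependence}, because for $l\in A$ and $m<n$ in $A$ one has $f(x_l{}^\frown y_l)=M_{ll}\neq c_m=f(x_m{}^\frown y_n)$. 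To find $A$, apply Ramsey's theorem to the $4$\nobreakdash-colouring of pairs $\{l<m\}$ by the two truth values $[M_{ll}=c_m]$ and $[M_{mm}=c_l]$. On an infinite homogeneous set $B$: if $M_{ll}=c_m$ for all $l<m$ in $B$, then comparing $l<m<n$ gives $c_m=c_n$, so both $c_m$ (for $m>\min B$) and $M_{ll}$ (for $l\in B$) equal one fixed value, whence $M_{mm}=c_m$ for $m\in B\setminus\{\min B\}$, contradicting $M_{mm}\neq c_m$; the colour with $[M_{mm}=c_l]$ true for all $l<m$ is excluded symmetrically. Hence $B$ has the all\nobreakdash-false colour, which together with $M_{ll}\neq c_l$ yields $M_{ll}\neq c_m$ for all $l,m\in B$; take $A:=B$.

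\emph{Case \eqref{4.Dependence}.} By Ramsey's theorem there is an infinite $A_1$ on which either $M_{pq}=M_{pr}$ for all $p<q<r$, or $M_{pq}\neq M_{pr}$ for all $p<q<r$. In the first subcase $M_{pq}$ is independent of $q>p$ in $A_1$, so — using $M_{pp}\neq M_{pq}$ from~\eqref{4.Dependence} — the sequences $(x_l)_{l\in A_1}$, $(y_l)_{l\in A_1}$ and the partition $u\sqcup v$ satisfy~\eqref{5.Dependence}, and~\eqref{2.Dependence} follows from the previous case. In the second subcase, apply Ramsey's theorem again to get an infinite $A_2\subseteq A_1$ on which either $M_{qp}=M_{rp}$ for all $p<q<r$ — which, after interchanging the roles of $u$ and $v$ (i.e.\ viewing $f$ with the partition $d=v\sqcup u$), is again~\eqref{5.Dependence}, hence~\eqref{2.Dependence} — or $M_{qp}\neq M_{rp}$ for all $p<q<r$. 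Thus we are reduced to the case in which, on $A_2$, every row of $M$ strictly above the diagonal and every column of $M$ strictly below the diagonal is injective.

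In that case keep the partition $u\sqcup v$ and build a staggered witness: recursively choose $b_0<a_0<b_1<a_1<\cdots$ in $A_2$ and set $x'_l:=x_{a_l}$, $y'_l:=y_{b_l}$. Since $b_l<a_l<b_{l+1}$, the value $f(x'_l{}^\frown y'_l)=M_{a_lb_l}$ is a strictly\nobreakdash-below\nobreakdash-diagonal entry, while for $m<n$ the value $f(x'_m{}^\frown y'_n)=M_{a_mb_n}$ is a strictly\nobreakdash-above\nobreakdash-diagonal entry. When $b_n$ is chosen only the finitely many entries $M_{a_lb_l}$ with $l<n$ are committed, and for each $m<n$ the map $b\mapsto M_{a_mb}$ is injective on the relevant tail of $A_2$, so $b_n$ can be chosen so that $M_{a_0b_n},\dots,M_{a_{n-1}b_n}$ all miss those finitely many values; dually, when $a_n$ is chosen, the injective map $a\mapsto M_{ab_n}$ on the relevant tail lets us pick $a_n$ so that $M_{a_nb_n}$ misses the finitely many above\nobreakdash-diagonal entries committed so far. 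Then no below\nobreakdash-diagonal entry used ever equals an above\nobreakdash-diagonal entry used, i.e.\ $f(x'_l{}^\frown y'_l)\neq f(x'_m{}^\frown y'_n)$ for all $l$ and all $m<n$, which is~\eqref{2.Dependence}. The main obstacle is precisely Case~\eqref{4.Dependence}: in contrast to Case~\eqref{5.Dependence} one cannot merely thin out the diagonal $(x_m{}^\frown y_m)_m$ — e.g.\ $f=\max\colon\bbN^2\to\bbN$ satisfies~\eqref{4.Dependence} but along every subsequence of $(m,m)_m$ property~\eqref{2.Dependence} fails — so the staggered re\nobreakdash-indexing is genuinely needed, and the real content is to arrange (via the two Ramsey reductions) that one either falls back to Case~\eqref{5.Dependence}, possibly with $u$ and $v$ swapped, or lands in the doubly injective situation in which the bookkeeping recursion closes.
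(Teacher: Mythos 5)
Your proof is correct, but it takes a genuinely different route from the paper's. The paper first colours triples $\{l<m<n\}$ by whether $f(x_l{}^\frown y_l)=f(x_m{}^\frown y_n)$, uses each of the hypotheses \eqref{5.Dependence} and \eqref{4.Dependence} to rule out $0$-homogeneity, and thus gets the inequality only for $l<m<n$; it then upgrades to the full quantifier of \eqref{2.Dependence} by splitting according to whether the diagonal $m\mapsto f(x_m{}^\frown y_m)$ is constant or injective, the injective case being dispatched by a six-valued colouring $h'$ in the spirit of the Erd\H{o}s--Rado canonical Ramsey theorem. You instead handle \eqref{5.Dependence} by extracting the constants $c_l$ and using a four-colouring of pairs to make the diagonal values avoid them, and you reduce \eqref{4.Dependence} by two Ramsey applications either back to \eqref{5.Dependence} (possibly with $u$ and $v$ swapped) or to the doubly injective situation, which you finish with the explicit staggered recursion $b_0<a_0<b_1<a_1<\cdots$; I checked the bookkeeping there and every below-diagonal/above-diagonal pair is indeed separated at the step where the later of the two entries is committed. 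Both arguments are Ramsey-theoretic, but yours obtains the full ``for all $l$ and all $m<n$'' inequality directly from a construction rather than through the canonical-type colouring; it is more hands-on and, in the injective case, arguably more transparent: your $\max$ example is exactly the configuration (injective diagonal, injective rows and columns) that the paper's $h'$-step is meant to absorb, and your staggering makes visible why the sequences must be re-paired rather than merely thinned there. What the paper's route buys is brevity and the connection to canonical Ramsey theory; what yours buys is a self-contained elementary argument with no case-checking of the $h'$ colours.
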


A standard bit of notation will come handy in the proof of Lemma~\ref{L.Dependence.2}. 
$\{l,m,n\}_<$ to denote the set $\{l,m,n\}$ while asserting that $l<m<n$ (analogous notation applies for any $d\geq 2$ in place of the number $3$.)

\begin{proof}Readers familiar with the canonical Erd\" os--Rado extension of Ramsey's theorem (\cite{erdos1950combinatorial}) surely don't need to read this proof  and processing a `slow' proof may take less effort on reader's behalf than parsing the former.\footnote{The unseemly definition of the function $h'$ will  hopefully pique the reader's interest in \cite{erdos1950combinatorial}.} 
	The proof in each of the two cases begins by defining a partition $h\colon [\bbN]^3\to \{0,1\}$ by 
	\[
	h(\{l,m,n\}_<)=
	\begin{cases}
		0, & \text{ if }f(x_l{}^\frown y_l)=f(x_m{}^\frown y_n),\\
		1, & \text{ if } f(x_l{}^\frown y_l)\neq f(x_m{}^\frown y_n). 
	\end{cases}
	\]
	By Ramsey's theorem, there is an infinite $h$-homogeneous $H\subseteq \bbN$. We claim that~$H$ cannot be 0-homogeneous. Assume otherwise. Then 0-homogeneity of the set $H$ implies that for $l<m<n<k$ in $H$ we have 
	\[
	f(x_m{}^\frown y_m)=f(x_n{}^\frown y_k)=f(x_l{}^\frown y_l)=f(x_m{}^\frown y_n), 
	\]
	contradicting  \eqref{5.Dependence}.
	Also, for $l<m<n<k$ in $H$ we have
	\[
	f(x_m,y_n)=f(x_l,y_l)=f(x_n,y_k)=f(x_m,y_m),
	\]
	contradicting \eqref{4.Dependence}.  
		
	Therefore in each of the cases \eqref{5.Dependence} and \eqref{4.Dependence} the set $H$ is $1$-homogeneous and for all $l<m<n$ we have $f(x_l{}^\frown y_l)\neq f(x_m{}^\frown y_n)$. By Ramsey's Theorem (and passing to subsequences of $x_m$ and $y_m$) we may assume that either (i) for all $m<n$ we have  $f(x_m{}^\frown y_m)=f(x_n{}^\frown y_n)$ or that (ii) for all $m<n$ we have  $f(x_m{}^\frown y_m)\neq f(x_n{}^\frown y_n)$. 
	
	If (i) applies, then for all $m$ and $k<l$ we have $f(x_m{}^\frown y_m)=f(x_k{}^\frown y_k)\neq f(x_k{}^\frown y_l)$, and \eqref{2.Dependence} holds. 
	If (ii) applies, define $h'\colon [\bbN]^3\to \{0,1\}$ by 
	\[
	h'(\{l,m,n\}_<)=\begin{cases}  
		1, &\text{ if } f(x_l{}^\frown y_l)=f(x_m{}^\frown y_n),\\
		2, &\text{ if } f(x_l{}^\frown y_l)=f(x_l{}^\frown y_m),\\
		3, &\text{ if } f(x_m{}^\frown y_m)=f(x_l{}^\frown y_m),\\
		4, &\text{ if } f(x_m{}^\frown y_m)=f(x_l{}^\frown y_n),\\
		5, &\text{ if } f(x_n{}^\frown y_n)=f(x_l{}^\frown y_m),\\
		0, & \text{ if none of the above cases applies}. 
	\end{cases}
	\]
	Since all functions $m\mapsto f(x_m{}^\frown y_m)$  and $n\mapsto f(x_m{}^\frown x_n)$ for $n>m$ are injective, $h'$ admits no infinite $j$-homogeneous sets for any $j\geq 1$. An infinite $0$-homogeneous set gives family as in \eqref{2.Dependence}. This concludes the proof. 
\end{proof}

In the proof of Theorem~\ref{T.Dependence} we use the consequence of the Axiom of Choice, compactness of the space $\beta X$ of ultrafilters on $X$ (i.e., its  \v Cech--Stone compactification) is compact. If $\cU $ is an ultrafilter on $X$ and $\varphi(x)$ is a formula, then 
\[
(\cU x) \varphi(x)
\]
stands for `the set of $x\in X$ for which $\varphi(x)$ holds belongs to $\cU$'. The quantifier $(\cU x)$ is self-dual, meaning that all $\cU$ and $\varphi$ satisfy
\[
\neg (\cU x)\varphi(x)\Leftrightarrow (\cU x)\neg \varphi(x), 
\]
as is easy to check. 

\begin{lemma} \label{L.Dependence.UF} If $d\geq 1$, $\cU_i$ for $i<d$ is an ultrafilter on $X$, and $f\colon X^d\to Y$ then either \eqref{2.Dependence} of Theorem~\ref{T.Dependence} applies or 
	\begin{enumerate}
		\popcounter
		\item \label{3.Dependence} there are $A_i\in \cU_i$ for $i<d$, $j<d$, and $g\colon A_j\to Y$ such that $f$ and $g\circ \pi_j$ agree on $\prod_{i<d} A_i$.
		\pushcounter
	\end{enumerate} 
\end{lemma}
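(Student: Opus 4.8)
The plan is to induct on $d$. For $d=1$ the statement is trivial: take $A_0 = X$, $j = 0$, and $g = f$. So suppose $d \geq 2$ and the lemma holds for all smaller arities. Given $f \colon X^d \to Y$ and ultrafilters $\cU_0,\dots,\cU_{d-1}$, I want to detect, using the ultrafilters, whether $f$ ``locally'' depends on a single coordinate on a large rectangle, and if not, extract a configuration as in \eqref{2.Dependence} — or, more realistically, one of the configurations \eqref{5.Dependence} or \eqref{4.Dependence} from Lemma~\ref{L.Dependence.2}, which by that lemma yields \eqref{2.Dependence}.

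The key idea is to split $d = \{0\} \sqcup \{1,\dots,d-1\}$ (any nontrivial split will do) and consider, for a fixed $x \in X$, the section $f_x \colon X^{d-1} \to Y$ defined by $f_x(y) = f(x^\frown y)$, where here $x$ occupies coordinate $0$. Apply the inductive hypothesis to each $f_x$ with the ultrafilters $\cU_1,\dots,\cU_{d-1}$. Either some $f_x$ produces a \eqref{2.Dependence}-type configuration on $X^{d-1}$ — which immediately lifts to one for $f$ on $X^d$ by prepending the constant value $x$ to each tuple, so we are done — or for every $x$ we get a coordinate $j(x) \in \{1,\dots,d-1\}$, sets $A_i^x \in \cU_i$ for $i \in \{1,\dots,d-1\}$, and $g^x \colon A_{j(x)}^x \to Y$ with $f_x = g^x \circ \pi_{j(x)}$ on $\prod_{1 \le i < d} A_i^x$. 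Since $\cU_0$ is an ultrafilter, there is a single $j^* \in \{1,\dots,d-1\}$ with $B := \{x : j(x) = j^*\} \in \cU_0$. Now the remaining task is to amalgamate the rectangles $\prod_i A_i^x$ over $x \in B$ into one rectangle on which $f$ depends only on coordinate $j^*$; if this amalgamation fails, that failure should be convertible into a \eqref{5.Dependence}/\eqref{4.Dependence} configuration.

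The main obstacle is precisely this amalgamation step: the sets $A_i^x$ depend on $x$, and one cannot in general intersect uncountably many ultrafilter-large sets. The way around it — and I expect this is what the author does — is to use a second ``diagonal'' application of the ultrafilter structure, or to argue more cleverly about $f$ restricted to $B \times X^{d-1}$ directly: look at whether $f$ restricted to the appropriate sub-rectangle depends on coordinate $0$ versus coordinate $j^*$, play the quantifiers $(\cU_0 x)(\cU_{j^*} y)\dots$ against each other using self-duality of the ultrafilter quantifier, and when the ``good'' case fails, read off points $x_m, y_m$ witnessing \eqref{5.Dependence} or \eqref{4.Dependence} by choosing them recursively so that each new pair sits inside the ultrafilter-large sets determined by the earlier pairs. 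The self-duality remark $\neg(\cU x)\varphi(x) \Leftrightarrow (\cU x)\neg\varphi(x)$ highlighted just before the lemma is the signal that the dichotomy ``\eqref{3.Dependence} holds'' vs.\ ``a bad configuration exists'' will come out of negating an ultrafilter-quantified statement and recursively building the witnesses. So the skeleton is: (1) reduce to $d = 2$ via the sectioning/induction above, then (2) at $d = 2$, carry out the quantifier analysis $(\cU_0 x)(\cU_1 y)$, and in the failure case recursively construct $x_m \in X$, $y_m \in X$ landing in nested ultrafilter-large sets to realize one of \eqref{5.Dependence}, \eqref{4.Dependence}, and invoke Lemma~\ref{L.Dependence.2}.
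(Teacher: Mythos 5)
Your skeleton does coincide with the paper's: induction on $d$, sectioning $f$ along one coordinate, applying the inductive hypothesis to each section, stabilizing the coordinate $j(x)$ on an ultrafilter-large set, and in all failure cases building sequences recursively inside nested ultrafilter-large sets so that Lemma~\ref{L.Dependence.2} yields \eqref{2.Dependence}. The choice of sectioning on the first rather than the last coordinate is immaterial. However, what you have written is a plan that defers exactly the two steps that constitute the proof, flagged by phrases such as ``should be convertible'' and ``I expect this is what the author does,'' so it cannot be accepted as a proof of the lemma.

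Concretely, two things are missing, and one target is mis-stated. First, the $d=2$ case: the paper splits into three cases according to whether $(\cU_0 x)(\exists c)(\cU_1 y)\, f(x,y)=c$, the symmetric statement, or neither holds (this is where self-duality of the ultrafilter quantifier is used), and in each failure case carries out an explicit recursion producing $x_m$, $y_m$ together with shrinking sets $B_m\in\cU_0$, $C_m\in\cU_1$ realizing \eqref{5.Dependence} or \eqref{4.Dependence}; you assert that such a recursion exists but do not give it. Second, the amalgamation dichotomy in the inductive step. Note that even a successful amalgamation cannot give a rectangle ``on which $f$ depends only on coordinate $j^*$'': on any uniform rectangle $f$ still depends on both $j^*$ and the sectioning coordinate, since its value there is $g^x(\pi_{j^*}(y))$. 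The paper's point is that in this case $f$ factors through those two coordinates via $\bar f(x,z)=g^x(z)$, and one then applies the already-proven $d=2$ case to $\bar f$ with the ultrafilters $\cU_{j^*}$ and the sectioning ultrafilter --- this is precisely why $d=2$ is proved separately before the induction, a structural point absent from your plan. In the failure case, the negation says that for every $\bar A=(A_i)_i$ in $\prod_i\cU_i$ there is an ultrafilter-large set of sectioning points $x$ admitting a disagreement witness inside $\bar A$; turning this into \eqref{2.Dependence} requires the diagonal recursion in which the rectangles are shrunk at each stage, $A_{i,m+1}=A^{x_m}_i\cap A_{i,m}$, so that later witnesses $y_n$ land in the region where the earlier sections $f^{x_m}$ agree with $g^{x_m}\circ\pi_{j^*}$, producing a configuration as in \eqref{5.Dependence} with $u$ consisting of $j^*$ and the sectioning coordinate. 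None of this is routine bookkeeping that can be left to the reader of a proof outline; supplying these two constructions is the substance of the lemma.
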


\begin{proof}
	By induction on $d$. The case $d=1$ is trivial (take $j=0$, $A_0=X$,  and $g=f$). We now prove the case when $d=2$ because it will be used in the proof of the inductive step. 
	
	Fix $f\colon X^2\to Y$ and ultrafilters $\cU$ and $\cV$ on $X$. If at least one of $\cU$ and $\cV$ is principal, then the assertion follows from the case $d=1$. We can therefore assume that both $\cU$ and $\cV$ are nonprincipal. The proof splits into three cases. 
	
	Assume for a moment that $(\cU x) (\exists g(x)\in Y) (\cV y)f(x,y)=g(x)$. 
	If in addition there is $A_1\in \cV$ such that  $A_0=\{x\mid  (\exists g(x)\in Y) (\cV y)f(x,y)=g(x)\}$ satisfies
	$(\forall x\in A_0) (\exists g(x)\in Y) (\forall y\in A_1)f(x,y)=g(x)$, then  $A_0,A_1$, $j=0$,  and $g$ witness that \eqref{3.Dependence} holds. 
	Otherwise, there is $B\in \cU$ such that for all $x\in B$ and every $A_1\in \cV$ some $y=y(x,A_1)\in A_1$ satisfies $f(x,y)\neq g(x)$. Let $u=\{0\}$ and  $v=\{1\}$. We will find $x_m\in X^u$, $y_m\in X^v$, and $C_m\in \cV$ such that  all $m\geq 0$ satisfy 
	\begin{enumerate}[label = (\roman *)]
		\item $C_m\supseteq C_{m+1}$, 
		\item For all $y\in C_m$ we have $g(x_m)=f(x_m,y)$ and and $g(x_m)\neq f(x_m,y_m)$. 
		\item $y_{m+1}\in C_m$. 
	\end{enumerate}
	Take $x_0\in B$, let $C_0=\{y\in X\mid f(x,y)=g(x)\}$, and let  $y_0=y(x_0,C_0)$ so that $f(x_0,y_0)\neq g(x_0)$. If $x_n,y_n,C_n$, had been chosen for $n<m$ and satisfy the conditions, then 
	take $x_m\in B\setminus \{x_n\mid n<m\}$, let  
	\[
	C_m=\{y\in C_{m-1}\mid f(x,y)=g(x)\}
	\] 
	and $y_m=y(x_m,C_m)$, so that $f(x_m,y_m)\neq g(x_m)$.  This describes the construction.

	For all 
	$l<m<n$ we have 
	$	f(x_l,y_l)\neq f(x_l,y_m)$ and $f(x_l,  y_m)=f(x_l, y_n)$
	and by Lemma~\ref{L.Dependence.2}, \eqref{2.Dependence} of Theorem~\ref{T.Dependence} follows. 
	
	This concludes the discussion of the first case.

	The second case is when $(\cV y) (\exists g(y)\in Y) (\cU y)f(x,y)=g(y)$. Then the function $f'(x,y)=f(y,x)$ satisfies the assumptions of the first case, and the conclusion follows.

	We may therefore assume that neither of the first two cases applies, hence 
	\begin{align*}
		(\cU x)(\forall c) (\cV y) f(x,y)&\neq c,\\
		(\cV y)(\forall c) (\cU x) f(x,y)&\neq c. 
	\end{align*}
	Let $u=\{0\}$, $v=\{1\}$. We will find $x_m\in X^u$ and $y_m\in X^v$ such that  \eqref{4.Dependence} of Lemma~\ref{L.Dependence.2} holds, hence all $0\leq m<n$ satisfy
		\begin{enumerate}\popcounter
		\item \label{4+.Dependence} $f(x_m,y_m)\neq f(x_m,y_n)$  and $f(x_m,y_m)\neq f(x_n,y_m)$. 
		\pushcounter
	\end{enumerate}
	Towards this, we will also find $B_m\in \cU$, $C_m\in \cV$, $x_m\in B_m$, and $y_m\in C_m$ for $m\geq 0$  such that the following conditions hold for all $m$. 
	\begin{align*}
		(\forall x\in B_{m+1}) f(x,y_m)&\neq f(x_m,y_m),\\
		(\forall y\in C_{m+1}) f(x_m,y)&\neq f(x_m,y_m),\\
		C_{m+1}\subseteq C_m,\qquad &B_{m+1}\subseteq B_m. 
	\end{align*}
	Let $B_0=\{x\mid (\forall c) (\cV y) f(x,y)\neq c\}$ and 
	$C_0=\{y\mid (\forall c) (\cU x) f(x,y)\neq c\}$. These sets belong to $\cU$ and $\cV$, respectively,  Choose $x_0\in B_0$ and $y_0\in C_0$. Then  $B_1=\{x\in B_0\mid f(x,y_0)\neq f(x_0,y_0)\}$ belongs to $\cU$ and $C_1=\{y\in C_0\mid f(x_0,y)\neq f(x_0,y_0)\}$ belongs to $\cV$. If $x_m,y_m,B_m$, and $C_m$ had been chosen, pick $x_{m+1}\in  B_m$ and $y_{m+1}\in C_m$. Then the sets
	\begin{align*}
B_{m+1}&=\{x\in B_m\mid f(x,y_{m+1})\neq f(x_{m+1},y_{m+1})\}\\
C_{m+1}&=\{y\in C_m\mid f(x_{m+1},y)\neq f(x_{m+1},y_{m+1})\} 
	\end{align*}  belong to $\cU$ and  $\cV$, respectively. This describes the construction of sequences that satisfy \eqref{4+.Dependence}. 
	
	Lemma~\ref{L.Dependence.2} implies that \eqref{2.Dependence} of Theorem~\ref{T.Dependence} applies. 
	This completes the proof of Lemma~\ref{L.Dependence.UF} in case when $d=2$. 
	
	Assume that the conclusion holds for $d\geq 2$. Fix $f\colon \prod_{i<d+1} X\to Y$ and ultrafilters $\cU_i$, for $i<d+1$, on $X$. We may assume that each $\cU_i$ is nonprincipal.  For $x\in X$ let 
	\(
	f^x\colon \prod_{i<d} X\to Y
	\)
	be defined by 
	\[
	f^x(x_0,\dots, x_{d-1})=f(x_0,\dots, x_{d-1}, x).
	\] 
	By the inductive hypothesis, for every $x\in X$ the function $f^x$ satisfies one of the alternatives given by  Theorem~\ref{T.Dependence}. If for some $x\in X$ the function $f^x$ satisfies  \ref{2.Dependence} with $d=\bar u\sqcup \bar v$,$\bar x_m\in X^{\bar u}$, $\bar y_m\in X^{\bar v}$ then let $u=\bar u\cup \{d\}$, $v=\bar v$, $x_m=\bar x_m{}^\frown x$, and $y_m=\bar y_m$. These objects witness that $f$ satisfies \eqref{2.Dependence}.

	We may therefore assume that \eqref{1.Dependence} of Theorem~\ref{T.Dependence} holds for $f^x$ for all $x\in X$. For each $x\in X$ fix $A^x_i\in \cU_i$ for $i<d$, $j(x)<d$, and $g^x\colon X\to Y$ such that
	\begin{equation*}\label{eq.Dependence.gx}
		f^x(y)=(g^x\circ \pi_{j(x)})(y)\text{ for all }y\in \prod_{i<d} A^x_i. 
	\end{equation*}
	Let $j<d$ be such that $(\cU_d x) j(x)=j$. As in the case when $d=2$, we consider cases. 
	
	Assume for a moment that there are $A_i\in \cU_i$ for $i<d+1$ such  that for every $x\in A_{d}$   the function $g^x$ is defined on $A_j$ and $f^x$ and $g^x\circ \pi_j$ agree on $\prod_{i<d} A_i$. 
	Let $\bar f(x,y)=g^x(y)$. Denoting the projection from $\prod_{i<d+1} A_i$ to $A_j\times A_d$ by $\pi_{j,d}$, the restriction of $f$ to $\prod_{i<d+1} A_i$ agrees with $\bar f\circ \pi_{j,d}$. By the already proven case $d=2$, the function $\bar f$ satisfies \eqref{3.Dependence}  of Lemma~\ref{L.Dependence.UF} and it is straightforward to see that in each of the two cases this conclusion carries to $f$. 
	
	We may therefore assume that for all $\bar A=(A_i)_{i<d}$ in $\prod_{i<d}\cU_i$  there is $B(\bar A)\in \cU_d$ such that for every $x\in B(\bar A)$ some $z=z(x,\bar A)$ in $A_j$ and $y=y(x,\bar A)$ in $\prod_{i<d, i\neq j}A_i$ satisfy
	\begin{equation}
		\label{eq.Dependence}\textstyle
		f^x(y)\neq (g^x\circ \pi_{j(x)})(y). 
	\end{equation}
	Let $u=\{j,d\}$ and $v=(d+1)\setminus u$. 
	We will proceed to choose sequences $x_m\in X^u$, $y_m\in X^v$,  and $\bar A_m=(A_{i,m})_{i<d}$ in $\prod_{i<d} \cU_i$ so that these objects satisfy the following  (it will be convenient to present $x_m$ as $x_{m}(j)^\frown x_{m}(d)$). 
	\begin{enumerate}[label = (\roman *)]
		\item $A_{i,0}=X$ for $i<d$. 
		\item $x_0(d)\in B(\bar A_0)$, $x_0(j)=z(x_0(d), \bar A_0)$, and $y_0=y(x,\bar A_0)$. 
		\item \label{3+.Dependence} $A_{i,m+1}=A^{x_m(0)}_i\cap A_{i,m}$ for all $i<d$. 
		\item $x_{m+1}(d)\in B (\bar A_{m+1})$, $x_{m+1}(j)=z(x_{m+1}(d), \bar A_{m+1})$. 
		\item \label{5+.Dependence} $y_{m}=y(x_m, \bar A_m)$. 
	\end{enumerate}
	Clause \ref{3+.Dependence} implies 
	$A_{i,m+1}\in \cU_i$  for  all $i<d$ and $f^{x_{m}}(y)=g^{x_m(d)}(x_m(j))$ for all $y\in \prod_{i\in v} A_{i,m+1}$.	 
	As \ref{5+.Dependence} implies that all $y\in \prod_{i<d,i\neq j} A_{i,m-1}$ satisfy $f(x_m{}^\frown y_m)\neq g^{x_m(d)}(x_m(j))$, we have $f(x_m{}^\frown y_m)\neq f(x_m{}^\frown y_n )$ and 
	$f(x_m{}^\frown y_n)= f(x_m{}^\frown y_k )$ for all $m<n<k$. Lemma~\ref{L.Dependence.2} \eqref{5.Dependence} implies that \eqref{2.Dependence} of Theorem~\ref{T.Dependence} holds and completes the proof. 
\end{proof}	

\begin{proof}
	[Proof of Theorem~\ref{T.Dependence}] 
	Fix $f\colon \prod_{i<d} X_i\to Y$. Suppose for a moment that for every $\bar\cU=(\cU_i)_{i<d}$  in $(\beta X)^d$ for $i<d$ there are sets $A_i^{\bar \cU}\in \cU_i$, for $i<d$, $j<d$, and $g^{\bar \cU}\colon U_j\to Y$  such that the restriction of $f$ to $\prod_{i<d} A_i^{\bar \cU}$ agrees with $g\circ \pi_j$. These sets correspond to an open neighbourhood of $\bar \cU$ hence compactness of $(\beta X)^d$ implies \ref{1.Dependence} of Theorem~\ref{T.Dependence}. 
	
	We may therefore assume that for some $\bar \cU$ there are no such objects, and Lemma~\ref{L.Dependence.UF} implies that \eqref{2.Dependence} of Theorem~\ref{T.Dependence} holds. 	
\end{proof}

\section{Concluding remarks}
\label{S.Choice} 

We do not know what fragment of the Axiom of Choice  is needed to prove Theorem~\ref{T.Dependence}, but we do know that it cannot be proven in ZF. Recall that a set is \emph{Dedekind-finite} if it admits no injection into a proper subset of itself. It is well-known that ZF is relatively consistent with the existence of an infinite, Dedekind--finite set (see e.g., \cite{jech2008axiom}).

\begin{proposition} Suppose that $X$ is a Dedekind-finite, infinite set. 
	Then there exists $f\colon X^2\to \{0,1\}$ such that both alternatives of Theorem~\ref{T.Dependence} fail.
	\end{proposition}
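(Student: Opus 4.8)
The plan is to check that the theorem already fails, for $d=2$, on the most naive function there is. Let $f\colon X^2\to\{0,1\}$ be defined by $f(x,y)=0$ if $x=y$ and $f(x,y)=1$ otherwise. I claim both alternatives of Theorem~\ref{T.Dependence} fail for this $f$, and I would verify this directly; no fragment of choice is needed, only the fact (a theorem of ZF) that a set is Dedekind-finite if and only if it admits no injection from $\bbN$.

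First I would dispose of alternative \eqref{2.Dependence}, which in fact fails for \emph{every} $f$ once $X$ is Dedekind-finite --- this is the only place the hypothesis is used. Suppose \eqref{2.Dependence} held, with $2=u\sqcup v$ (so $u$ and $v$ are singletons) and $x_m\in X^u$, $y_m\in X^v$ for $m\in\bbN$ with $f(x_l{}^\frown y_l)\neq f(x_m{}^\frown y_n)$ for all $l$ and all $m<n$. Taking $l=m$ gives $f(x_m{}^\frown y_m)\neq f(x_m{}^\frown y_n)$ whenever $m<n$; but $x_m{}^\frown y_m$ and $x_m{}^\frown y_n$ differ only in the coordinates indexed by $v$, so this forces $y_m\neq y_n$. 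Hence $m\mapsto y_m$ is an injection of $\bbN$ into $X^v$, which (as $v$ is a singleton) is an injection of $\bbN$ into $X$, contradicting Dedekind-finiteness.

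Next I would rule out alternative \eqref{1.Dependence} for this particular $f$; here Dedekind-finiteness is irrelevant and only the infinitude of $X$ matters. Suppose \eqref{1.Dependence} held, with $k$ and partitions $X=\bigsqcup_{j<k}U_{0,j}=\bigsqcup_{j<k}U_{1,j}$. Since $X$ is infinite and $X=\bigsqcup_{(a,b)\in k\times k}(U_{0,a}\cap U_{1,b})$, finite pigeonhole gives $(a,b)$ with $W:=U_{0,a}\cap U_{1,b}$ infinite; fix distinct $p,q\in W$. On the rectangle $U_{0,a}\times U_{1,b}$, hence on $W\times W$, $f$ agrees with $g_s\circ\pi_{j(s)}$ for $s=(a,b)$ and some $j(s)\in\{0,1\}$, so $f\restriction W\times W$ is constant in the other coordinate. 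But $f(p,p)=0\neq1=f(p,q)=f(q,p)$, so $f\restriction W\times W$ is constant in neither coordinate, a contradiction. (Alternatively, without passing to a common refinement: if \eqref{1.Dependence} held then the union $Y_0$ of the $\geq2$-element cells of the first partition and the union $Y_1$ of those of the second would have to be disjoint; as $X\setminus Y_0$ and $X\setminus Y_1$ are finite unions of singletons, $Y_0\subseteq X\setminus Y_1$ would be finite, so $X$ would be finite.)

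I do not expect a genuine obstacle. The one thing worth isolating --- and really the whole content on the ``hard'' side --- is the remark that alternative \eqref{2.Dependence} automatically manufactures an injective $\omega$-sequence in $X$, so it is vacuously unavailable for a Dedekind-finite $X$; after that, all that remains is to name a function for which \eqref{1.Dependence} also fails, and the indicator of inequality is the obvious candidate. One should of course confirm that nothing in the argument smuggles in choice: it does not, since $f$ is canonically defined and the two reductions use only the finite pigeonhole principle and the ZF-equivalence of Dedekind-finiteness with the absence of an injection from $\bbN$.
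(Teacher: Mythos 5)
Your proposal is correct and follows essentially the same route as the paper: the same indicator-of-inequality function $f$, the same observation that alternative \eqref{2.Dependence} fails for \emph{any} $f$ because it would yield an injective sequence $(y_m)$ from $\bbN$ into the Dedekind-finite $X$, and the same pigeonhole argument that some cell of the partition contains two distinct points, on whose square $f$ depends on both coordinates. Your only addition is the explicit passage to the common refinement of the two partitions, a small point the paper glosses over by treating a single partition of $X$.
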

	
	\begin{proof}
		Let $f(x,y)=0$ if $x=y$ and $f(x,y)=1$ if $x\neq y$. If $X=\bigcup_{i<k} U_i$ then since $X$ is infinite there exists $j<k$ such that $X^2\cap U_j^2$ is infinite. Clearly  the function $f$ depends on both variables on $U_j^2$. 
		
		Assume that \eqref{2.Dependence} holds. Then each one of $u$ and $v$ is a singleton and   $x_m$, $y_m$, for $m\in \bbN$,  are sequences of elements of $X$.  Since $X$ is Dedekind-finite, there are $m<n$ such that $y_m=y_n$. Then $f(x_m{}^\frown y_m)=f(x_m{}^\frown y_n)$; contradiction. 
	\end{proof}

	The reader may object that in choiceless context a more natural analog of~\eqref{2.Dependence} should involve sequences $(x_m)$ and $(y_m)$ indexed by some infinite (possibly Dedekind-finite) sets instead of $\bbN$. Clearly if $X$ is Dedekind-finite then the partition into singletons is Dedekind-finite and each of the rectangles is a singleton, hence $f$ is constant on it and allowing this version of (1) leads nowhere. We may therefore consider the variant in which \eqref{1.Dependence} of Theorem~\ref{T.Dependence} is unchanged but \eqref{2.Dependence}  is modified by allowing the sequences to be indexed by a Dedekind-finite set.  A stronger assumption refutes this alternative. A set $X$ is a \emph{Russell set} if $X=\bigsqcup_{n\in \bbN} X_n$ where each $X_n$ has two elements but for every $Y\subseteq X$ the set $\{n\mid |Y\cap X_n|=1\}$ is finite.  A Russell set is Dedekind-finite and it is well-known that if ZF is relatively consistent with the existence of  Russell set (see e.g., \cite{jech2008axiom}). 
	
	\begin{proposition}If $X$ is a Russell set then there is $f\colon X^2\to \{0,1\}$ such that for every $k\in \bbN$ and partition $X=\bigsqcup_{i<k} U_i$ for some $i<k$ the restriction of $f$ to $U_i^2$ depends on both coordinates and for every infinite set $Z$ and all $x_z,y_z$ in $X$ there are arbitrarily large finite sets $I\subseteq Z$ such that the restriction of $f$ to $\{x_z\vert z\in Z\}\times \{y_z\vert z\in Z\}$ is constant. 
	\end{proposition}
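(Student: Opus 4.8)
The plan is to let $f$ factor through the canonical quotient $\beta\colon X\to\bbN$ (with $\beta^{-1}(n)=X_n$), encoding on $\bbN$ a graph that is not finitely chromatic but has no infinite clique. Fix a partition $\bbN=\bigsqcup_{n\ge 1}C_n$ into consecutive finite sets with $|C_n|=n$, and put $f(p,q)=1$ iff $\beta(p)\ne\beta(q)$ and $\beta(p),\beta(q)$ lie in one common $C_n$, and $f(p,q)=0$ otherwise (so $f(p,p)=0$). The two relevant features are elementary: no partition of $\bbN$ into finitely many parts can have every part meeting each $C_n$ in at most one point, since a part with more than $(\#\text{parts})$ points of some $C_n$ must occur; and every infinite $S\subseteq\bbN$ contains, for each $k$, a $k$-element subset meeting each $C_n$ at most once, because $S$ meets infinitely many $C_n$'s and selecting one point from each of $k$ of them is a \emph{finite} choice, available in ZF.

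For the partition property, let $X=\bigsqcup_{i<k}U_i$. Since $X$ is Russell, only finitely many blocks $X_n$ meet two distinct $U_i$'s: otherwise, pigeonholing on which pair of parts a split block falls between and keeping its one-point trace in the part of smaller index, one produces $Y\subseteq X$ with $\{n:|Y\cap X_n|=1\}$ infinite. Hence the sets $\beta(U_i)$ partition $\bbN$ off a finite set; choosing $C_n$ with $n>k$ and disjoint from the finitely many split blocks, some $\beta(U_{i_0})$ contains two indices $n\ne m$ of $C_n$, and we may take $n,m$ to belong to unsplit blocks, so $X_n,X_m\subseteq U_{i_0}$. If $X_n=\{a,b\}$ and $c\in X_m$ then $f(a,b)=1$ while $f(a,c)=f(c,b)=0$, so $f\restriction U_{i_0}^2$ depends on both coordinates. (In general $f\restriction U^2$ fails to depend on the first coordinate precisely when $\beta(U)$ meets each $C_n$ at most once, and the same condition governs the second coordinate.)

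For the second property fix infinite $Z$ and $x,y\colon Z\to X$; with $a=\beta\circ x$, $b=\beta\circ y$ it suffices to find, for each $k$, a finite $I\subseteq Z$ of size $\ge k$ on which $f$ is constant on $x(I)\times y(I)$, equivalently on which $h$ is constant on the grid $a(I)\times b(I)$. Passing to an infinite subset of $Z$ we may assume each of $a,b$ is constant or finite-to-one with infinite range (both preserved under shrinking). If some fibre $x^{-1}(p)$ is infinite, restrict to it; then $q\mapsto f(p,q)$ is supported on the finite set obtained from the $C_n$ containing $\beta(p)$ by deleting $X_{\beta(p)}$, so (after first restricting to an infinite $y$-fibre if one exists, where we are done immediately) any $k$-element $A\subseteq y(Z)$ disjoint from that finite set gives $I=y^{-1}(A)$; symmetrically for infinite $y$-fibres. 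So assume $x,y$ are finite-to-one, whence the maps $\hat a,\hat b\colon Z\to\bbN$ recording which $C_n$ contains $a(z)$, resp.\ $b(z)$, are finite-to-one with infinite range. If $\hat a(Z)\cap\hat b(Z)$ is finite, then $I=\{z:\hat a(z)>\max(\hat a(Z)\cap\hat b(Z))\}$ is cofinite in $Z$ and $f\equiv 0$ on $x(I)\times y(I)$, so all finite subsets of $I$ work. If it is infinite, either arbitrarily large $I\subseteq Z$ with $\hat a(I)\cap\hat b(I)=\emptyset$ exist, giving $f\equiv 0$ on the corresponding rectangles, or a maximal such $I^{*}$ is finite; in the latter case, for $z\notin I^{*}$ the set $I^{*}\cup\{z\}$ is not interval-cross-free, which forces $\hat a(z)\in\hat b(I^{*})$, $\hat b(z)\in\hat a(I^{*})$, or $\hat a(z)=\hat b(z)$, and since $x,y$ are finite-to-one the first two sets of $z$'s are finite, so $\{z:\hat a(z)=\hat b(z)\}$ is cofinite in $Z$. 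Restrict to it and rename, so now $a(z)$ and $b(z)$ always lie in a common $C_{c(z)}$ with $c$ finite-to-one of infinite range, and split $Z=D_0\sqcup D_1$ with $D_0=\{z:\beta(x_z)=\beta(y_z)\}$. If $D_0$ is infinite, pick $z_1,\dots,z_k\in D_0$ with $\hat a(z_1),\dots,\hat a(z_k)$ distinct (a finite choice): then $a(I)=b(I)=\{a(z_1),\dots,a(z_k)\}$ is spread across $k$ distinct $C_n$'s, so $h\equiv 0$ on its square and $f\equiv 0$ on $x(I)\times y(I)$.

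The remaining case, $D_1$ infinite, is the main obstacle. There $f(x_z,y_z)=1$ for every $z$, so any constant rectangle $x(I)\times y(I)$ must be constantly $1$, which forces $c$ to be constant on $I$ and $a(I),b(I)$ to be disjoint subsets of a single $C_n$; what must be shown is that such $I$ of unbounded size exist. I expect this to come from the Russell hypothesis: if the fibres $x^{-1}(X_n)$ and $y^{-1}(X_n)$ were uniformly bounded while $D_1$ stays infinite, then rerunning the maximal-cross-free argument inside the individual $C_n$'s and tracking, for each heavily hit block, the two fibres over its two points would yield a selector on infinitely many of the $X_n$, which a Russell set forbids; and without such a bound a single level $C_n$ already supplies disjoint-image sets $I$ of arbitrary size. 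Verifying this last case carefully — disentangling the interaction between the finite-to-one maps $x,y$ and the two-element blocks so as to locate the forbidden selector — is where the real work lies, and is the step I would expect to consume most of the effort.
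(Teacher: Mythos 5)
Your proposal is incomplete by your own admission (the case ``$D_1$ infinite'' is left open), and unfortunately that gap cannot be closed, because the function you chose is not in fact a witness for the proposition. Since your $f$ factors through the block-index map $\beta\colon X\to\bbN$ (its value at $(p,q)$ depends only on $(\beta(p),\beta(q))$), one can defeat it by a configuration built from \emph{whole blocks}, which requires no choice at all. Concretely, for $n\geq 2$ let $i_n<j_n$ be the two least elements of $C_n$ and set $Z^*=\bigcup_{n\geq 2} X_{i_n}\times X_{j_n}$, with $x_z,y_z$ the two coordinates of $z\in Z^*$. This is an infinite set, definable in ZF from the given data. If $z$ lies at level $n$ and $z'$ at level $m$, then $f(x_z,y_{z'})=1$ when $n=m$ (since $i_n\neq j_n$ lie in the same $C_n$) and $f(x_z,y_{z'})=0$ when $n\neq m$ (different intervals). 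Hence any $I\subseteq Z^*$ meeting two distinct levels gives a non-constant rectangle (the diagonal entry at one level is $1$, the cross-level entry is $0$), so a constant rectangle forces $I$ to lie inside a single level, which has exactly $4$ elements. Thus there are no constant rectangles with $|I|>4$, and the second property fails for your $f$. This also refutes the heuristic in your last paragraph: bounded fibres over the intervals do \emph{not} force a selector on infinitely many $X_n$, precisely because one can populate each interval with entire blocks rather than chosen points. (A minor slip elsewhere: with your definition $f(a,b)=0$ and $f(a,c)=1$ in the partition argument, not the other way around; that part is repairable.)

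The underlying issue is structural: any $f$ that factors through the countable quotient $\bbN$ is vulnerable to such whole-block configurations, so the Russell hypothesis cannot be brought to bear on the second property. The paper instead takes $f(x,y)=0$ exactly when $\{x,y\}=X_n$ for some $n$, i.e., it encodes the two-element block structure itself; then for each $x$ there is exactly one $y$ with $f(x,y)=0$, the diagonal values $f(x_z,y_z)$ are controlled by which blocks occur as pairs $\{x_z,y_z\}$, and large constant-$1$ rectangles are extracted by a greedy finite recursion. So the divergence from the paper is not merely a different route to the same end: the key step you postponed is exactly where your construction breaks, and a correct proof needs an $f$ that, like the paper's, genuinely uses the blocks rather than their indices.
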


	\begin{proof}
		We have that $X=\bigsqcup_{n\in \bbN} X_n$,  each $X_n$ has two elements and for every $Y\subseteq X$ the set $\{n\mid |Y\cap X_n|=1\}$ is finite. Let $f(x,y)=0$ if $\{x,y\}=X_n$ for some $n$ and $f(x,y)=1$ otherwise. 
		If $X=\bigsqcup_{i<k} U_i$, then for some $i$ the set $\{n\vert X_n\cap U_i\neq \emptyset\}$ is infinite. Thus the set 
		$\{n\vert X_n\subseteq U_i\}$ is infinite, and the restriction of $f$ to $U_i^2$ is isomorphic to the restriction of $f$ to $X^2$.  

	Now assume that $Z$ is an infinite set and that $x_z, y_z$, for $z\in Z$ belong to $X$. Then the set of $n$ such that some $z$ satisfies $\{x_z,y_z\}=X_n$ is finite, therefore all but finitely many $z$  satisfy $f(x_z,y_z)=1$. 
		For every $x\in X$ we have that $f(x,y)=0$ or $f(y,z)=0$ for exactly one $y\in X$. One can therefore for every $k\geq 1$ recursively choose $F\subseteq Z$ of cardinality $k$ such that $f(x_z,y_{z'})=1$ for all $z$ and $z'$ in $F$. 
	\end{proof}
	
	We conclude with an easy self-strengthening of Theorem~\ref{T.Dependence}.

	\begin{proposition}[ZF] For all $d\geq 1$, well-orderable sets $X_i$, for $i<d$ and $Y$, every function $f\colon \prod_{i<d} X_i\to Y$  
		satisfies one of the alternatives of Theorem~\ref{T.Dependence}. 
		\end{proposition}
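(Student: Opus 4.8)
The plan is to deduce this directly from Theorem~\ref{T.Dependence} by relativizing to an inner model of $\mathrm{ZFC}$, exploiting that the only non-$\mathrm{ZF}$ ingredient of that theorem — compactness of $(\beta X)^d$ — is invoked solely to establish a conclusion that is absolute. First I would eliminate the internal structure of the sets involved: since the $X_i$ and $Y$ are well-orderable, fix bijections $X_i\cong\kappa_i:=|X_i|$ and $Y\cong\lambda:=|Y|$ with $\kappa_i,\lambda$ cardinals, and transport $f$ to $\bar f\colon\prod_{i<d}\kappa_i\to\lambda$. Transporting partitions, projections and functions back and forth along these bijections shows that $f$ satisfies \eqref{1.Dependence} (resp. \eqref{2.Dependence}) of Theorem~\ref{T.Dependence} if and only if $\bar f$ does, so it is enough to treat $\bar f$.

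Next I would encode $\bar f$ by a set of ordinals. In $\mathrm{ZF}$ the finite product $(\prod_{i<d}\kappa_i)\times\lambda$ is well-orderable, so using the canonical G\"odel coding of tuples of ordinals one obtains a bijection $b\colon(\prod_{i<d}\kappa_i)\times\lambda\to\mu$ for some ordinal $\mu$, with $b$ definable from the ordinals $\kappa_i,\lambda$. Put $E:=b[\bar f]\subseteq\mu$; then $b^{-1}$ is definable from $E$'s parameters and $\bar f=b^{-1}[E]$, so $\bar f$ (together with $d$, the $\kappa_i$ and $\lambda$) belongs to $L[E]$. Since $L[E]\models\mathrm{ZF}+\mathrm{AC}$ whenever $V\models\mathrm{ZF}$, Theorem~\ref{T.Dependence} applies inside $L[E]$ to $\bar f$, so in $L[E]$ one of the two alternatives holds: either there are a finite $k$, partitions of the $\kappa_i$ and functions $g_s$ as in \eqref{1.Dependence}, or there are a partition $d=u\sqcup v$ and sequences $(x_m),(y_m)$ as in \eqref{2.Dependence}.

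Finally I would transfer this upward. In either case the witnessing data are genuine sets of $L[E]\subseteq V$: a finite partition together with finitely many functions between subsets of ordinals, or two $\omega$-indexed sequences of tuples of ordinals. The conditions they satisfy — the identities $\bar f(p)=g_s(\pi_{j(s)}(p))$ for $p$ ranging over a box $\prod_{i<d}U_{i,s(i)}$, respectively the inequalities $\bar f(x_l{}^\frown y_l)\neq\bar f(x_m{}^\frown y_n)$ for $l$ and $m<n$ — only involve membership, function application and equality of elements of sets that are computed identically in $L[E]$ and in $V$ (because everything in sight consists of ordinals), so they are absolute between $L[E]$ and $V$. Hence $\bar f$, and therefore $f$, satisfies the corresponding alternative in $V$.

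I do not expect a genuine obstacle: the content is entirely in the opening observation, that the disjunction \eqref{1.Dependence}~$\vee$~\eqref{2.Dependence} is an existential statement whose witnesses carry absolute defining conditions, so one may harmlessly pass to $L[E]$. The two remaining steps are the coding (routine, and the one place where well-orderability of \emph{all} of $X_0,\dots,X_{d-1},Y$ is used) and the absoluteness bookkeeping in the transfer step (equally routine). If one wished to avoid even mentioning $L[E]$, one could instead run the proof of Theorem~\ref{T.Dependence} verbatim inside any transitive model of $\mathrm{ZFC}$ containing the coded data; the inner-model phrasing is simply the cleanest.
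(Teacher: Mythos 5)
Your proposal is correct and follows essentially the same route as the paper: after using well-orderability to identify everything with ordinals, you pass to an inner model of ZFC built from (a code for) $f$ — your $L[E]$ versus the paper's $L[f]$ — apply Theorem~\ref{T.Dependence} there, and note that whichever alternative holds is witnessed by data whose defining conditions are upward absolute (the paper phrases this as each alternative being $\Sigma_1$). The Gödel coding of the graph as a set of ordinals is a cosmetic variant of the paper's direct use of the graph as a subset of $X^d\times Y$ with $X,Y$ ordinals, so there is nothing substantive to distinguish the two arguments.
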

		
		\begin{proof}
			As in Theorem~\ref{T.Dependence},  it suffices to consider the case when $f\colon X^d\to Y$. 
			Clearly $X^d$ is well-orderable, and so is the range of $f$, as an image of the well-orderable set $X^d$. 
			Therefore we can identify $X$ and $Y$ with ordinals, and the graph of $f$ with a subset of $X^d\times Y$. The model $L[f]$  is a model of ZFC (\cite[Exercise~II.6.30]{Ku:Set}). 
			Each of the alternatives of Theorem~\ref{T.Dependence} is a $\Sigma_1$ statement and therefore holds in $V$. 
		\end{proof}


In \cite[Question~11]{Fa:Dependence} I asked whether there is a finitary version of Theorem~\ref{T.Dependence}, without suggesting what such finitary version should look like. A more precise question (to which I do not know the answer) is whether for all $d$ and $k$ there is $N=N(d,k)$ such that for all $X$ and all $f\colon X^d\to X$, if there are no $d=u\sqcup v$, $x_m\in X^u$, $y_m\in Y^v$  for $m<n$ such that 
		$f(x_l{}^\frown y_l)\neq f(x_m{}^\frown y_n)$ for all $ l<N$ and $m<n<N$, then there is a partition $X=\bigsqcup_{j<k} U_{i}$ such that for every $s\in k^d$ there are $j(s)<d$  and $g_s\colon U_{j(s)}\to Y$ such that $f$ agrees with $g_s\circ \pi_{j(s)}$ on $\prod_{i<d} U_{s(i)}$. 
		
		Finally a word on the motivation for this work. Theorem~\ref{T.Dependence}  was inspired by van Douwen's \cite{vD:Prime}. By using ideas from \cite{vD:Prime} and \cite{Fa:AQ}, this theorem was used \cite{Fa:Dimension} and \cite{Fa:Powers} to prove that \v Cech--Stone remainders of 0-dimensional spaces exhibit some `dimension phenomena'.  For example, $(\bbN^*)^n$ admits a continuous surjection onto $(\bbN^*)^{n+1}$ for some $n\geq 1$ if and only if this holds for $n=1$ (\cite{Fa:Powers}). This strengthens the main result of \cite{Just:Omega^n} modulo the available lifting results. See \cite{yilmaz2023weak} for the ultimate result along these lines. 
		
	\bibliography{ifmainbib}
	\bibliographystyle{plain}
	
\end{document}